\newtheorem{lemma}{Lemma}[section]
\newtheorem{theorem}[lemma]{Theorem}
\theoremstyle{definition}
\newtheorem{rem}[lemma]{Remark}
\newtheorem{definition}{Open Problem}
\numberwithin{equation}{section}
\def\trace{\text{trace}}
\def\diam{\text{diam}}
\def\iden{\text{I}}
\def\R{\mathbb{R}}
\def\div{\text{div}}
\title[Isoperimetric, Sobolev, and eigenvalue inequalities via ABP]
{Isoperimetric, Sobolev, and eigenvalue inequalities via the 
Alexandroff-Bakelman-Pucci method: a survey}
\author{Xavier Cabr\'e}
\address[X.C.]{
ICREA and Universitat Polit\`ecnica de Catalunya\\
Departament de Matem\`atica Aplicada~I\\
Diagonal 647. 08028 Barcelona. Spain.}
\email{xavier.cabre@upc.edu}
\thanks{The author was supported by grant MINECO MTM2011-27739-C04-01}
\subjclass[2010]{Primary 28A75; Secondary 35P15, 35A23, 49Q20}
\begin{document}

\begin{abstract}
We present the proof of several inequalities using the technique introduced by 
Alexandroff, Bakelman, and Pucci to establish their ABP estimate.
First, we give a new and simple proof of a lower bound of Berestycki, Nirenberg, and Varadhan 
concerning the principal eigenvalue of an elliptic operator with bounded measurable coefficients.
The rest of the paper is a survey on the proofs of several isoperimetric and Sobolev inequalities
using the ABP technique. This includes new proofs of the classical isoperimetric inequality,
the Wulff isoperimetric inequality, and the Lions-Pacella isoperimetric inequality in convex cones.
For this last inequality, the new proof was recently found by the author, Xavier Ros-Oton, and
Joaquim Serra in a work where we also prove new Sobolev inequalities with weights
which came up studying an open question raised by Haim Brezis.
\end{abstract}

\date{}
\maketitle
\vspace {-.2in}
\begin{center}
{\em Dedicated to Haim Brezis, with great admiration}
\end{center}

\tableofcontents

\section{Introduction}

In this article we present the proof of several inequalities using the technique introduced by 
Alexandroff, Bakelman, and Pucci to establish their ABP estimate.
The Alexandroff-Bakelman-Pucci (or ABP) estimate
is an $L^\infty$ bound for solutions of the Dirichlet 
problem associated to second order uniformly elliptic operators
written in nondivergence form,
$$
Lu= a_{ij}(x) \partial_{ij} u +b_i (x) \partial_{i} u + c(x) u, 
$$
with bounded measurable coefficients in a domain 
$\Omega$ of $\R^n$. It asserts that if
$\Omega$ is bounded and $c\leq 0$ in $\Omega$ then,
for every function $u\in C^2(\Omega)\cap C(\overline\Omega)$,
\begin{equation}\label{ABP1}
\sup_\Omega u\le \sup_{\partial\Omega} u
+ C \, \diam (\Omega)\, \Vert Lu \Vert_{L^n(\Omega)} ,
\end{equation}
where $\diam (\Omega)$ denotes the diameter of $\Omega$, and $C$
is a constant depending only on the ellipticity constants of
$L$ and on the $L^n$-norms of the coefficients $b_i$
---see Remark~\ref{proofABP} below for its proof and Chapter~9 of \cite{GT} for more details.
The estimate was proven by the previous authors in the sixties using
a technique that in this paper we call ABP method.
Both the estimate and the method have applications in several areas. 

First, the ABP estimate is a basic tool in the 
regularity theory for fully nonlinear elliptic equations
$F(D^2u)=0$. The ABP method is also a key ingredient in Jensen's 
uniqueness result for viscosity solutions. For these
questions, see for instance \cite{CC}. Other applications were developed around 1994 by 
Berestycki, Nirenberg, and Varadhan \cite{BNV}, who established lower bounds on the principal eigenvalue of 
the operator $L-c(x)$ and, as a consequence, maximum principles in ``small'' domains. These maximum principles 
are very useful ---when combined with the moving planes method--- to establish symmetry of 
positive solutions of nonlinear problems (see \cite{BN,C2}).

In this paper we give a new and simple proof (unpublished before) of the lower bound of 
Berestycki, Nirenberg, and Varadhan~\cite{BNV} concerning the principal eigenvalue $\lambda_1=\lambda_1(L_0,\Omega)$ of 
the operator $L_0:=L-c(x)$, i.e.,
\begin{equation*}\label{L0}
L_0u= a_{ij}(x) \partial_{ij} u +b_i (x) \partial_{i} u.
\end{equation*}
The bound asserts that
\begin{equation}\label{lower}
\lambda_1 (L_0,\Omega) \ge \mu \vert\Omega\vert^{-2/n}
\end{equation}
for some positive constant $\mu$ depending only on the 
ellipticity constants of $L_0$, the $L^\infty$-norms
of the coefficients $b_i$, and an upper bound for $|\Omega|^{1/n}$. In particular,
if one has such upper bound for $|\Omega|$, then the constant $\mu$ is independent of $|\Omega|$. As a 
consequence, if $|\Omega|$ tends to zero then $\lambda_1(L_0,\Omega)$ tends to infinity, by \eqref{lower}.

In contrast with theirs, our proof uses
only the ABP method and does not require the  Krylov-Safonov Harnack inequality.
Our proof gives a slight improvement of this result
by showing that $\mu$ depends in fact on the $L^n$-norms of the coefficients $b_i$ instead of the $L^\infty$-norms. 
To prove this lower bound on $\lambda_1$, we apply the ABP method to the problem satisfied by
the logarithm of the principal eigenfunction of~$L_0$.

Note that the constant $\mu$ in the lower bound does not depend on any modulus of regularity for 
the coefficients of $L_0$.
This is why we say that it is a bound for operators with bounded measurable coefficients. This generality is
crucial for the applications to fully nonlinear elliptic equations.

When $L_0$ is in divergence form with bounded measurable coefficients, \eqref{lower} was proved by
Brezis and Lions~\cite{BL}. They established an estimate of the type \eqref{ABP1} with $L^n$ replaced by $L^\infty$.
When applied to the first eigenfunction, it gives \eqref{lower} for operators in divergence form.

An improvement of the ABP estimate \eqref{ABP1} in which $\diam (\Omega)$ is replaced by $|\Omega|^{1/n}$
was proved by the author in \cite{C1}; see also \cite{C2}.

When $L_0=\Delta$ is the Laplacian, \eqref{lower} with its best constant $\mu$ is the Faber-Krahn inequality,
and becomes an equality when $\Omega$ is a ball; see \cite{Ga}. Thus, among sets with same given volume, the ball has
the smallest first Dirichlet eigenvalue. In this respect we would like to raise the following:

\begin{definition}
When $L_0=\Delta$ is the Laplacian, can one prove the Faber-Krahn inequality 
(that is, inequality \eqref{lower} with best constant, achieved by balls) using an ABP method as described in
the following sections? 
\end{definition}

The rest of this paper is a survey in several isoperimetric inequalities proved using the ABP method.
We present first the proof of the classical isoperimetric inequality in $\R^n$ found by the author around 1996;
see~\cite{CSCM,CDCDS}. It uses the ABP technique applied to a linear Neumann problem for the Laplacian
---instead of applying the method to a Dirichlet problem as in the ABP estimate. 
It yields then the isoperimetric inequality with
best constant. In addition, the proof does not require the domain to be convex, and it shows easily that balls are the 
only smooth domains for which equality holds.

The proof using the ABP method can also be adapted to anisotropic perimeters. This gives a new proof of the Wulff isoperimetric inequality,
presented in Section~4.

The proof has also been recently extended by J. Serra and M. Teixid\'o \cite{ST}, in a very clever way, 
to domains in simply connected Cartan-Hadamard Riemannian manifolds of dimension two. 
These are manifolds with nonpositive sectional curvature.
In this way, they give a new proof that the Euclidean isoperimetric inequality (i.e., inequality \eqref{isop} below with the 
Euclidean constant $P(B_1)/|B_1|^{\frac{n-1}{n}}$) 
is also valid in such two-dimensional manifolds (with the same Euclidean constant on it). 
In higher dimensions (except for 3 and 4) 
this is an important conjecture which has been open for long time; see \cite{Dr}.

Finally, Section~5 concerns the recent paper \cite{CRS2}, by the author, X. Ros-Oton, and
J. Serra, where we established new isoperimetric and Sobolev inequalities with weights
in convex cones of $\R^n$. In particular we give a new poof of the Lions-Pacella isoperimetric inequality \cite{LP} 
in convex cones. Let us recall that the classical proofs of the Wulff and the Lions-Pacella isoperimetric inequalities
used the Brunn-Minkowski inequality \eqref{brunn}.

The result in \cite{CRS2} states that Euclidean balls centered at the origin solve the weighted isoperimetric problem 
in any open convex cone $\Sigma$ of $\R^n$ (with vertex at the origin) for the following class of weights.
Here, both perimeter and measure are computed with respect to the weight.
The weight $w$ must be nonnegative, continuous, positively homogeneous of degree $\alpha\geq 0$, and such that 
$w^{1/\alpha}$ is concave in the cone $\Sigma$ if $\alpha>0$.
This concavity condition is equivalent to a
natural curvature-dimension bound ---in fact, to the nonnegativeness of a
Bakry-\'Emery Ricci tensor in dimension $D=n+\alpha$. Except for the constant ones, all these weights are not radially 
symmetric but still balls centered at the origin are the isoperimetric sets.

Our proof uses the ABP method applied to a Neumann problem for the operator
$$
w^{-1}\div(w\nabla u)=\Delta u+\frac{\nabla w}{w}\cdot \nabla u.
$$

This result yields as a consequence the following Sobolev inequality.
If $D=n+\alpha$, $1\leq p<D$, and $p_*=\frac{pD}{D-p}$, then
\begin{equation}\label{Sob}
\left(\int_{\Sigma}|u|^{p_*}w(x)dx\right)^{1/p_*}\leq C_{w,p,n}
\left(\int_{\Sigma}|\nabla u|^pw(x)dx\right)^{1/p}
\end{equation}
for all smooth functions $u$ with compact support in $\R^n$ ---in particular,
not necessarily vanishing on $\partial\Sigma$. We can give the value of the best constant
$C_{w,p,n}$ since it is attained by certain radial functions; see \cite{CR2}.

Monomial weights,
\begin{equation}\label{mon}
w(x)=x_1^{A_1}\cdots x_n^{A_n}\qquad\text{in}\quad\Sigma=\{x\in\mathbb R^n\,:\, x_i>0
\textrm{ whenever }A_i>0\}
\end{equation}
(here $A_i\geq 0$), are an example of weights satisfying the above assumptions.
The Sobolev inequality \eqref{Sob} with the above monomial weights $w$ appeared naturally in the paper
\cite{CR}, by the author and X. Ros-Oton, while studying the following open question raised by Haim Brezis.

\begin{definition} \textbf{(Haim Brezis, 1996 \cite{B,BV})} Is the extremal solution of the problem
$-\Delta u=\lambda f(u)$ in a bounded smooth domain $\Omega\subset\R^n$, with zero Dirichlet boundary
conditions, always bounded if the dimension $n\leq 9$, and this for every positive, increasing, and convex nonlinearity~$f$? 
(see \cite{B,BV,CR} for more details).

A stronger statement is if the same conclusion holds for every stable solution of the Dirichlet problem for
$-\Delta u= f(u)$ in $\Omega$. It has been proved to be true in dimensions 2 and 3 by G. Nedev, in dimension 4 by the author, 
and in the radial case up to dimension 9 by the author and A. Capella; see the references in \cite{CSS}.
In \cite{C3}, we showed that these regularity results hold essentially for any nonnegative nonlinearity $f$.
\end{definition}

In \cite{CR} we studied this problem in convex domains with symmetry of double revolution, and we establish its
validity up to dimension $n\leq 7$.
If $\R^n=\R^m\times \R^k$, we say that a domain is of double revolution if it is invariant under 
rotations of the first $m$ variables and also under rotations of the last $k$ variables.
Stable solutions will depend only on the ``radial'' variables $s=\sqrt{x_1^2+\cdots +x_m^2}$ and
$t=\sqrt{x_{m+1}^2+\cdots +x_n^2}$. In these coordinates, the Lebesgue measure in $\R^n$
becomes $s^{m-1}t^{k-1}\, ds\, dt$. This is a monomial weight as in \eqref{mon}. In \cite{CR},
to prove regularity results we needed the above Sobolev inequalities with monomial weights,
even with nonintegers $A_i$ in \eqref{mon}.

\section{The principal eigenvalue for elliptic operators with bounded measurable coefficients}

The ABP estimate is the basic bound for subsolutions $u$ of
the Dirichlet problem
\begin{equation}
\left\{
\alignedat2
Lu &\ge f &\quad &\text{in } \Omega \\
u  &\le 0 &\quad &\text{on } \partial\Omega ,
\endalignedat
\right.
\label{dirich}
\end{equation}
where $L$ is an elliptic operator written in nondivergence form 
$$
Lu= a_{ij}(x) \partial_{ij} u +b_i (x) \partial_{i} u + c(x) u,
$$
in a domain $\Omega \subset \R^n$. We assume that $L$ 
is uniformly elliptic with bounded measurable coefficients, i.e.,
$b:=(b_1,\ldots ,b_n)\in L^\infty(\Omega)$,
$c\in L^\infty(\Omega)$ and
$$
c_0 \vert \xi\vert^2 \leq a_{ij} (x) \,\xi_i\,\xi_j
\leq C_0\vert \xi\vert ^2\qquad \forall
\xi\in {\R}^n \; \;\forall x\in \Omega  
$$
for some constants $0<c_0\le C_0$.
The ABP estimate states that, if 
$\Omega$ is bounded, $c\leq 0$ in $\Omega$, 
$u\in C^2(\Omega)\cap C(\overline\Omega)$ and \eqref{dirich}
holds, then
\begin{equation}
\sup_\Omega u\leq C \, \diam (\Omega)\, \Vert f\Vert_{L^n(\Omega)} ,
\label{abp}
\end{equation}
where $\diam (\Omega)$ denotes the diameter of $\Omega$ and $C$
is a constant depending only on  $n$, $c_0$, and 
$\Vert b\Vert_{L^n(\Omega)}$.

The proof of the ABP estimate is explained below in Remark~\ref{proofABP}, after
having presented in detail the ABP proof of the isoperimetric inequality.

In 1979, Krylov and Safonov used the ABP estimate and the 
Calder\'on-Zygmund cube decomposition to establish a deep
result: the Harnack inequality for second order uniformly
elliptic equations in nondivergence form with bounded 
measurable coefficients. This result allowed for the development 
of a regularity theory for fully nonlinear equations (see \cite{CC}).

Consider now the operator
$$
L_0u=(L-c(x))u=a_{ij}(x) \partial_{ij} u +b_i (x)\partial_i u ,
$$
and assume that $\Omega$ is 
a bounded smooth domain
and that the coefficients $a_{ij}$ are smooth in 
$\overline\Omega$.
In \cite{BNV} it is proved the existence 
of a unique eigenvalue $\lambda_1=\lambda_1 (L_0,\Omega)$ of $-L_0$ in $\Omega$ (the principal
eigenvalue) having a positive (smooth) eigenfunction $\varphi_1$
(the principal eigenfunction):
\begin{equation*}
\left\{
\alignedat3
L_0\varphi_1 &= -\lambda_1 \varphi_1 &\quad &\text{in } \Omega\\
\varphi_1   &=0 &\quad &\text{on }\partial \Omega \\
\varphi_1   &>0 &\quad &\text{in }\Omega .
\endalignedat
\right.
\label{preig}
\end{equation*}
In addition, $\lambda_1$ is a simple eigenvalue and satisfies
$\lambda_1 >0$. 

In Theorem~2.5 of \cite{BNV}, Berestycki, Nirenberg, and Varadhan used the Krylov-Safonov
theory to establish the lower bound
$\lambda_1 \ge \mu\vert\Omega\vert^{-2/n}$
for some positive constant $\mu$ depending only on 
$n$, $c_0$, $C_0$, and an upper bound on  $\vert\Omega\vert^{1/n}
\Vert b\Vert_{L^\infty(\Omega)}$.
We now give a simpler proof (unpublished before) of this lower bound 
using the ABP method. We do not need to use the Krylov-Safonov theory. Our proof improves slightly the bound
by showing that $\mu$ can be taken to depend on $\Vert b\Vert_{L^n(\Omega)}$ ---instead of
$|\Omega|^{1/n}\Vert b\Vert_{L^\infty(\Omega)}$. More precisely, we have the following:

\begin{theorem} 
If $\Omega$ is bounded, the principal eigenvalue $\lambda_1 (L_0,\Omega)$ of $L_0$ in $\Omega$ satisfies
\begin{equation*}
\lambda_1 (L_0,\Omega) \ge \mu \vert\Omega\vert^{-2/n},
\label{fklow}
\end{equation*}
where $\mu$ is a positive constant depending only on $n$,
$c_0$, $C_0$, and $\Vert b\Vert_{L^n(\Omega)}$.
\end{theorem}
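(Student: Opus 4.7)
Following the hint given in the introduction, I would apply the ABP method directly to $v := \log \varphi_1$. Since $\varphi_1 > 0$ in $\Omega$ and vanishes on $\partial\Omega$, the function $v$ is smooth in $\Omega$ and tends to $-\infty$ at $\partial\Omega$. Using $\partial_i v = \partial_i\varphi_1/\varphi_1$ and $\partial_{ij}v = \partial_{ij}\varphi_1/\varphi_1 - \partial_i v\,\partial_j v$ in $L_0\varphi_1 = -\lambda_1\varphi_1$ yields the nonlinear equation
\[
a_{ij}\partial_{ij}v + b_i\partial_i v + a_{ij}\partial_i v\,\partial_j v = -\lambda_1 \quad\text{in } \Omega.
\]
The crucial feature is that $v\to -\infty$ at $\partial\Omega$, so for every $p\in\R^n$ the function $v(y)-p\cdot y$ attains its supremum over $\overline\Omega$ at some interior point $x_p$, where $\nabla v(x_p)=p$ and $-D^2 v(x_p)\ge 0$. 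In particular, the gradient of $v$ maps the upper contact set $\Gamma^+\subset\Omega$ onto all of $\R^n$.

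I would then apply the standard area-formula argument underlying the ABP method: for every $r>0$,
\[
|B_r|\le \int_{\Gamma^+\cap\{|\nabla v|\le r\}} |\det D^2 v|\,dx.
\]
On $\Gamma^+$, the matrix arithmetic-geometric-mean inequality applied to the positive definite matrix $(a_{ij})$ and the positive semidefinite matrix $-D^2 v$ gives $|\det D^2 v|\le c_0^{-n} n^{-n}(-a_{ij}\partial_{ij}v)^n$. Substituting the equation for $v$ and using that $|\nabla v|\le r$ on the region of integration,
\[
-a_{ij}\partial_{ij}v=\lambda_1+b\cdot\nabla v+a_{ij}\partial_i v\,\partial_j v\le \lambda_1+|b|\,r+C_0\,r^2.
\]
Taking $n$-th roots of the resulting integral inequality and using Minkowski's inequality in $L^n(\Omega)$ to separate the three summands yields
\[
c_0\, n\, \omega_n^{1/n}\,r\le \lambda_1|\Omega|^{1/n}+r\|b\|_{L^n(\Omega)}+C_0\,r^2|\Omega|^{1/n},
\]
where $\omega_n := |B_1|$.

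The final step is to optimize in $r$ to extract a lower bound on $\lambda_1$. The main obstacle is the middle term $r\|b\|_{L^n}$: since it carries the same power of $r$ as the left-hand side, it cannot be absorbed by rescaling $r$. For $\|b\|_{L^n(\Omega)}$ small, taking $r\sim \sqrt{\lambda_1/C_0}$ balances the $\lambda_1|\Omega|^{1/n}$ and $C_0 r^2|\Omega|^{1/n}$ terms and directly yields $\lambda_1\ge \mu|\Omega|^{-2/n}$ with an explicit positive $\mu$. To accommodate an arbitrary $\|b\|_{L^n(\Omega)}$, one has to be more careful, for instance by using Young's inequality to absorb $r\|b\|_{L^n}$ into a weighted combination of $\lambda_1|\Omega|^{1/n}$ and $C_0 r^2|\Omega|^{1/n}$ with weights depending on $\|b\|_{L^n}$. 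I expect this final balancing to be the delicate point of the proof; it produces the claimed positive constant $\mu=\mu(n,c_0,C_0,\|b\|_{L^n})$.
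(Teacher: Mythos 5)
Your derivation up to the inequality
\begin{equation*}
c_0\, n\, \omega_n^{1/n}\, r \;\le\; \lambda_1|\Omega|^{1/n} + r\,\Vert b\Vert_{L^n(\Omega)} + C_0\, r^2|\Omega|^{1/n}
\qquad\text{for all } r>0
\end{equation*}
is correct and follows the same opening steps as the paper (logarithm of $\varphi_1$, contact set, $\nabla v(\Gamma^+)=\R^n$, matrix arithmetic--geometric inequality). The gap is the final step, and it cannot be repaired by the balancing you propose: whenever $\Vert b\Vert_{L^n(\Omega)}\ge c_0 n\omega_n^{1/n}$ the displayed inequality holds trivially for every $r$, every $\lambda_1$, and every $\Omega$ (the middle term alone dominates the left-hand side), so it contains no information about $\lambda_1$. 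Young's inequality does not help: absorbing $r\Vert b\Vert_{L^n}$ into the quadratic term produces an additive constant proportional to $\Vert b\Vert_{L^n(\Omega)}^2/\bigl(C_0|\Omega|^{1/n}\bigr)$ which is not multiplied by $\lambda_1$, and optimizing in $r$ then yields a positive lower bound on $\lambda_1$ only under the very same smallness condition $\Vert b\Vert_{L^n(\Omega)}<c_0 n\omega_n^{1/n}$. The obstruction is structural: after truncating to $\{|\nabla v|\le r\}$, the drift contribution has exactly the same homogeneity in $r$ as the left-hand side, so no choice of $r$ or of weights can beat a large drift. Hence your argument proves the theorem only for $\Vert b\Vert_{L^n(\Omega)}$ below an explicit dimensional threshold, not for the general statement.

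The paper's proof avoids this by not truncating. Instead of measuring $|B_r|$, it integrates a weight over the whole image $\nabla u(\Gamma_u)=\R^n$ in the area formula, with the specific choice $g(|p|)=\bigl(\lambda_1^n+|\Omega|^{-1}|p|^n+|p|^{2n}\bigr)^{-1}$. On the contact set the pointwise quotient
\begin{equation*}
\frac{\lambda_1^n+|b|^n|\nabla u|^n+|\nabla u|^{2n}}{\lambda_1^n+|\Omega|^{-1}|\nabla u|^n+|\nabla u|^{2n}}\;\le\; 1+|\Omega|\,|b|^n
\end{equation*}
is controlled because the denominator contains a term with the same homogeneity $|\nabla u|^n$ as the drift term; integrating over $\Gamma_u$ gives a right-hand side at most $C\bigl(1+\Vert b\Vert^n_{L^n(\Omega)}\bigr)|\Omega|$, with no smallness required and only the $L^n$ norm of $b$ entering. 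On the other side, restricting the $p$-integral to $|p|\le|\Omega|^{-1/n}$ yields the lower bound $c(n)|\Omega|\log\bigl(1+2|\Omega|^{-2}\lambda_1^{-n}\bigr)$, and the logarithmic divergence as $\lambda_1\to0$ is exactly what converts an arbitrarily large (but finite) right-hand side into the bound $\lambda_1\ge\mu|\Omega|^{-2/n}$ with $\mu=\mu(n,c_0,C_0,\Vert b\Vert_{L^n(\Omega)})>0$. To complete your proof you need such a weighted version of the area-formula step (or some substitute for it) in place of the unweighted truncation at $|\nabla v|\le r$.
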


\begin{proof}
Since $\varphi_1 >0$ in $\Omega$ we can consider 
the function 
$$
u=-\log\varphi_1 . 
$$
Using that $\nabla u=-\varphi_1^{-1}
\nabla\varphi_1$, we have that
\begin{equation}
\left\{
\alignedat2
a_{ij}\,\partial_{ij}u &= \lambda_1 - b_i\,\partial_i u
    +a_{ij}\,\partial_i u\,\partial_j u &\quad &\text{in } \Omega\\
u &=+\infty &\quad &\text{on }\partial \Omega .
\endalignedat
\right.
\label{prlog}
\end{equation}

We consider the lower contact set of $u$, defined by
\begin{equation*}
\Gamma_u =\{ x \in \Omega \ : \
u(y) \ge u(x) + \nabla u (x) \cdot (y-x)\  \text{ for all } y \in 
\overline \Omega \} .
\label{lcset0}
\end{equation*}
It is the set of points where the tangent hyperplane to the
graph of $u$ lies below $u$ in all $\overline \Omega$. 

For every $p\in\R^n$, the minimum  
$\min_{\overline\Omega}\,\{ u(y)-p\cdot y\}$ 
is achieved at an interior point of $\Omega$, since $u=+\infty$
on $\partial\Omega$ and $\Omega$ is bounded. At such a point $x$ in $\Omega$ of minimum
of the function $y \mapsto u(y)-p\cdot y$,
we have $x\in\Gamma_u$ and $p=\nabla u(x)$. It follows that
\begin{equation}
\R^n = \nabla u(\Gamma_u).
\label{gradmap0}
\end{equation}
It is interesting to visualize geometrically this proof by considering 
the graphs of the functions $p\cdot y + c$ for $c\in \R$. These are
parallel hyperplanes which lie, for $c$ close to $-\infty$, 
below the graph of $u$. We let~$c$ increase and consider the first~$c$ 
for which there is contact or ``touching'' at a point~$x$. 
It is clear that 
$x\not\in\partial\Omega$, since $u=+\infty$ on $\partial\Omega$.

Using \eqref{gradmap0}, we can apply the area formula to the map $p=\nabla u (x)$ for
$x\in\Gamma_u$ and, integrating in $\R^n$ a positive function
$g=g(\vert p\vert)$ to be chosen later, we obtain
\begin{equation}
\int_{\R^n} g(\vert p\vert) \, dp \le \int_{\Gamma_u} 
g(\vert\nabla u(x)\vert)\det D^2u(x) \,dx .
\label{arealog}
\end{equation}
Note that $D^2 u(x)$ is nonnegative definite at any point $x\in\Gamma_u$.

Next, we use the matrix inequality $\det (AB)\le\{\trace(AB)/n\}^n$,
which holds for every pair $A$ and $B$ of nonnegative symmetric
matrices. This is a simple extension of the arithmetic-geometric means inequality.
We apply it with $A=[a_{ij}(x)]$ and $B=D^2u(x)$ 
for $x\in\Gamma_u$. We also use that
$$
(a_{ij}\partial_{ij}u)^n\le 
C (\lambda_1^n+\vert b\vert^{n}\vert\nabla u\vert^n
+\vert\nabla u\vert^{2n}) \quad \text{in } \Gamma_u ,
$$
which follows from \eqref{prlog}. Here, and throughout the proof, 
$C$ will denote a positive
constant depending only on $n$,
$c_0$, $C_0$, and $\Vert b\Vert_{L^n(\Omega)}$. 
We deduce that
$$
\alignedat2
\det\, & D^2u \le c_0^{-n} \det ([a_{ij}] D^2u) 
\le c_0^{-n} \left( \frac{\trace ([a_{ij}] D^2u)}{n}\right)^n 
= (n c_0)^{-n} (a_{ij}\partial_{ij}u)^n \\
&
\le C (\lambda_1^n+\vert b\vert^{n}\vert\nabla u\vert^n
+\vert\nabla u\vert^{2n})\quad \text{in } \Gamma_u .
\endalignedat
$$
Therefore, choosing
$g(\vert p\vert)=(\lambda_1^n+\vert \Omega\vert^{-1}\vert p\vert^n
+\vert p\vert^{2n})^{-1}$ in \eqref{arealog}, we have
\begin{equation}
\alignedat2
\int_{\R^n} \frac{dp}{\lambda_1^n+\vert \Omega\vert^{-1}  \vert p\vert^n
+\vert p\vert^{2n}} & \le  \int_{\Gamma_u}
\frac{C (\lambda_1^n+\vert b\vert^{n}\vert\nabla u\vert^n
+\vert\nabla u\vert^{2n})}{\lambda_1^n+\vert \Omega\vert^{-1}  \vert \nabla u\vert^n
+\vert \nabla u\vert^{2n}}\,dx \\
& \le  
C \int_{\Gamma_u}  (1+\vert\Omega\vert \vert b\vert^n) \,dx \\
&\le C \left(1+\Vert b\Vert^n_{L^n(\Omega)}\right)\vert\Omega\vert 
\le C\vert\Omega\vert .
\endalignedat
\label{upint}
\end{equation}

On the other hand, using that 
$\lambda_1^n+\vert \Omega\vert^{-1}\vert p\vert^n
+\vert p\vert^{2n}\le\lambda_1^n+2\vert \Omega\vert^{-1}\vert
p\vert^n$ for $\vert p\vert \le \vert\Omega\vert^{-1/n}$,
we see that
\begin{equation}
\alignedat2
\int_{\R^n} \frac{dp}{\lambda_1^n+\vert \Omega\vert^{-1}\vert p\vert^n
+\vert p\vert^{2n}} &\ge 
\int_{B_{\vert\Omega\vert^{-1/n}}} 
\frac{dp}{\lambda_1^n+2\vert \Omega\vert^{-1}\vert p\vert^n}\\ 
&=c(n)\vert\Omega\vert\log\left( 1+\frac{2\vert
\Omega\vert^{-2}}{\lambda_1^n} \right).
\endalignedat
\label{loint}
\end{equation}
Combining \eqref{upint} and \eqref{loint}, we conclude 
$2\vert \Omega\vert^{-2}\lambda_1^{-n}\le C$,
which is the desired inequality.
\end{proof}

\section{The classical isoperimetric inequality}

In this section we present a proof of the 
classical isoperimetric problem for smooth domains of $\R^n$ which uses the ABP technique. 
It was found by the author in 1996 and published in \cite{CSCM,CDCDS}.
The proof establishes the following:

\begin{theorem}{\rm \textbf{(Isoperimetric inequality)}}
Let $\Omega$ be a bounded smooth domain of $\R^n$. Then
\begin{equation}
\frac{P(\Omega)}{\vert \Omega \vert^{\frac{n-1}{n}}}
\ge \frac{P(B_1)}{\vert B_1 \vert^{\frac{n-1}{n}}} 
\ ,
\label{isop}
\end{equation}
where $B_1$ is the unit ball of $\R^n$, $\vert \Omega \vert$ denotes
the measure of $\Omega$, and $P(\Omega)$ the perimeter of $\Omega$. Moreover, equality occurs 
in \eqref{isop} if and only if $\Omega$ is a ball of~$\R^n$.
\end{theorem}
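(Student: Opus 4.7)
The plan is to apply the ABP method to a linear Neumann problem for the Laplacian on $\Omega$, departing from the usual Dirichlet setting of the ABP estimate \eqref{ABP1}. I would begin by solving
\begin{equation*}
\begin{cases}
\Delta u = \dfrac{P(\Omega)}{|\Omega|} & \text{in } \Omega, \\
\partial_\nu u = 1 & \text{on } \partial\Omega,
\end{cases}
\end{equation*}
where $\nu$ is the outward unit normal. The constant on the right-hand side is fixed by the compatibility identity $\int_\Omega \Delta u = \int_{\partial\Omega} \partial_\nu u$, and standard elliptic regularity for smooth $\partial\Omega$ yields a solution $u \in C^\infty(\overline{\Omega})$, unique up to an additive constant.

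Next I would introduce the lower contact set
\begin{equation*}
\Gamma_u = \{ x \in \Omega : u(y) \geq u(x) + \nabla u(x) \cdot (y-x) \text{ for all } y \in \overline{\Omega}\}
\end{equation*}
and establish the crucial geometric inclusion $B_1 \subset \nabla u(\Gamma_u)$. For each $p \in B_1$, the function $y \mapsto u(y) - p \cdot y$ attains its minimum on $\overline{\Omega}$; if the minimum were a boundary point $x_0$, the first-order condition would give $\partial_\nu u(x_0) \leq p \cdot \nu(x_0) \leq |p| < 1$, contradicting $\partial_\nu u \equiv 1$ on $\partial \Omega$. Hence the minimum is attained in the interior, at a point $x \in \Gamma_u$ with $\nabla u(x) = p$.

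The remaining chain of inequalities then follows the template of the ABP method used in the previous section. Applying the area formula to $\nabla u : \Gamma_u \to \R^n$, using that $D^2 u$ is nonnegative definite on $\Gamma_u$, and invoking the arithmetic-geometric means inequality $\det D^2 u \leq (\Delta u / n)^n$, I would obtain
\begin{equation*}
|B_1| \leq \int_{\Gamma_u} \det D^2 u \, dx \leq \int_{\Gamma_u} \left( \frac{\Delta u}{n} \right)^n dx \leq |\Omega| \left( \frac{P(\Omega)}{n|\Omega|} \right)^n.
\end{equation*}
Taking $n$-th roots and using $n|B_1|^{1/n} = P(B_1)/|B_1|^{(n-1)/n}$ (a consequence of $P(B_1) = n |B_1|$ in $\R^n$) yields precisely the inequality \eqref{isop}.

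The step I expect to be most delicate is the equality case. Equality in \eqref{isop} forces each inequality above to be an equality: $|\Gamma_u| = |\Omega|$, $\det D^2 u = (\Delta u/n)^n$ on $\Gamma_u$, and the gradient map covers $B_1$ with multiplicity one almost everywhere. The middle identity forces $D^2 u = \alpha\, I$ with $\alpha = P(\Omega)/(n|\Omega|)$ on the full-measure set $\Gamma_u$; smoothness of $u$ then propagates this to all of $\Omega$, so $u(x) = \tfrac{\alpha}{2} |x - x_0|^2 + c$ for some $x_0 \in \R^n$. Consequently $\nabla u(\Omega) = \alpha (\Omega - x_0)$ is an open set that coincides with $B_1$ up to a null set, hence equals $B_1$, forcing $\Omega$ to be the ball $B_{1/\alpha}(x_0)$ as required.
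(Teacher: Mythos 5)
Your proposal is correct and follows essentially the same route as the paper: the Neumann problem $\Delta u = P(\Omega)/|\Omega|$, $\partial_\nu u = 1$, the inclusion $B_1 \subset \nabla u(\Gamma_u)$ via the contact-set/Legendre-transform argument, the area formula combined with the arithmetic-geometric means inequality, and the same rigidity analysis for equality. The only (harmless) deviation is in the equality case, where you propagate $D^2u=\alpha\,\iden$ from the full-measure set $\Gamma_u$ to all of $\Omega$ by continuity, whereas the paper first deduces $\Gamma_u=\Omega$ from the relative closedness of $\Gamma_u$ in $\Omega$; both are valid.
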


\begin{proof} 
Let $u$ be a solution of the Neumann problem
\begin{equation}
\left\{
\alignedat2
\Delta u &= \frac{P(\Omega)}{\vert \Omega \vert} 
&\quad &\text{in } \Omega\\
\frac{\partial u}{\partial\nu} &=1 &\quad &\text{on }\partial \Omega ,
\endalignedat
\right.
\label{eqsem}
\end{equation}
where $\Delta$ denotes the Laplace operator and $\partial u /\partial\nu$
the exterior normal derivative of $u$ on $\partial \Omega$. The
constant $P(\Omega) /\vert \Omega \vert$ 
has been chosen so that the problem has a unique solution up to
an additive constant. 
For these classical facts, see Example 2 in Section 10.5 of \cite{H}, or the end of Section 6.7 of \cite{GT}.
In addition, we have that $u$ is smooth in~$\overline \Omega$. 

We consider the lower contact set of $u$, defined by
\begin{equation}
\Gamma_u =\{ x \in \Omega \ : \
u(y) \ge u(x) + \nabla u (x) \cdot (y-x)\  \text{ for all } y \in 
\overline \Omega \} .
\label{lcset}
\end{equation}
It is the set of points where the tangent hyperplane to the
graph of $u$ lies below $u$ in all $\overline \Omega$. We claim that
\begin{equation}
B_1 (0) \subset \nabla u (\Gamma_u) ,
\label{gradmap}
\end{equation}
where $B_1 (0)=B_1$ denotes the unit ball of $\R^n$ with center $0$.

To show \eqref{gradmap}, take any $p\in \R^n$ satisfying $\vert p \vert <1$. 
Let $x\in \overline \Omega$ be a point such that
$$
\min_{y\in \overline \Omega} \,\{ u(y)
-p\cdot y \} = u(x)-p\cdot x 
$$
(this is, up to a sign, the Legendre transform of $u$).
If $x\in \partial \Omega$ then the exterior normal derivative of 
$u(y)-p\cdot y$ at $x$ would be 
nonpositive and hence $(\partial u /\partial\nu)
(x) \le \vert p \vert <1$, a contradiction  with \eqref{eqsem}. It follows 
that $x\in \Omega$ and, therefore, that $x$ is an interior minimum of 
the function $u(y)-p\cdot y$. In particular, 
$p=\nabla u (x)$ and $x\in \Gamma_u$. Claim \eqref{gradmap} is now proved.
It is interesting to visualize geometrically the proof
of the claim, by considering 
the graphs of the functions $p\cdot y + c$ for $c\in \R$. These are
parallel hyperplanes which lie, for $c$ close to $-\infty$, 
below the graph of $u$.
We let~$c$ increase and consider the first~$c$ 
for which there is contact or ``touching'' at a point~$x$. 
It is clear geometrically that 
$x\not\in\partial\Omega$,
since $\vert p\vert <1$ and $\partial u /\partial\nu =1$ on 
$\partial\Omega$.

Next, from \eqref{gradmap} we deduce
\begin{equation}
\vert B_1\vert \le \vert \nabla u (\Gamma_u) \vert = 
\int_{\nabla u (\Gamma_u)} dp 
\le \int_{\Gamma_u} \det D^2u (x) \ dx .
\label{ineq}
\end{equation}
We have applied the area formula to the map $\nabla u : \Gamma_u
\rightarrow \R^n$, and we have used that its Jacobian,
$\det D^2u$, is nonnegative in $\Gamma_u$ by definition of this set. 

Finally, we use the arithmetic-geometric means inequality
applied to the eigenvalues of $D^2u(x)$ 
(which are nonnegative numbers for $x\in \Gamma_u$). We obtain
\begin{equation}
\det D^2u \le \left( \frac{\Delta u}{n} \right)^n \quad \text{in }
\Gamma_u .
\label{means}
\end{equation}
This, combined with \eqref{ineq} and 
$\Delta u \equiv P(\Omega) / \vert \Omega \vert$,
gives
\begin{equation}
\vert B_1 \vert \le \left( \frac{P(\Omega)}
{n \vert \Omega \vert} \right)^n \vert \Gamma_u \vert  
\le \left( \frac{P(\Omega)}
{n \vert \Omega \vert} \right)^n \vert \Omega \vert  .
\label{contact}
\end{equation}
Since $P( B_1) = n \vert B_1\vert$, we 
conclude the isoperimetric inequality
\begin{equation}
\frac{P( B_1)}{\vert B_1 \vert^{\frac{n-1}{n}}}= 
n\vert B_1 \vert^{\frac{1}{n}} \le
\frac{P(\Omega)}{\vert \Omega \vert^{\frac{n-1}{n}}}.
\label{isopfin}
\end{equation}

Note that when $\Omega=B_1$ then $u(x)=\vert x\vert^2/2$ and,
in particular, all the eigenvalues of $D^2u(x)$ are equal. 
Therefore, it is clear that \eqref{gradmap} and \eqref{means} are 
equalities when $\Omega=B_1$. This explains why the proof
gives the isoperimetric inequality with best constant.

The previous proof can also be used to show that balls are the only smooth
domains for which equality occurs in the isoperimetric inequality. Indeed,
if \eqref{isopfin} is an equality then all the inequalities in \eqref{ineq},
\eqref{means} and \eqref{contact} are also equalities. In particular, we have
$\vert \Gamma_u \vert = \vert \Omega\vert$. Since $\Gamma_u \subset \Omega$,
$\Omega$ is an open set, and $\Gamma_u$ is closed relatively to $\Omega$,
we deduce that $\Gamma_u = \Omega$.

Recall that the geometric and arithmetic means of $n$ nonnegative numbers
are equal if and only if these $n$ numbers are all equal. Hence, the equality 
in \eqref{means} 
and the fact that $\Delta u$ is constant in $\Omega$ give that 
$D^2 u = a\iden$ in all $\Gamma_u = \Omega$, where $\iden$ 
is the identity matrix and
$a=P(\Omega) /(n\vert\Omega\vert)$ 
is a positive constant. Let $x_0 \in \Omega$ be any given point.
Integrating $D^2u=a\iden$ on segments from $x_0$,
we deduce that
$$
u(x)=u(x_0)+\nabla u(x_0) \cdot (x-x_0) + \frac{a}{2}\, \vert x-x_0 \vert^2
$$
for $x$ in a neighborhood of $x_0$. 
In particular, 
$\nabla u (x) = \nabla u (x_0) + a(x-x_0)$ in such a
neighborhood, and hence the map 
$\nabla u  - a\iden$ is locally constant. 
Since $\Omega$ is connected we deduce that this map is indeed
a constant, say $\nabla u  - a\iden\equiv y_0$.

It follows that 
$\nabla u (\Gamma_u) = \nabla u (\Omega) = y_0 + a\Omega$.
By \eqref{gradmap} we know that $B_1(0)\subset \nabla u (\Gamma_u)= 
y_0 + a\Omega$. In addition,
these two open smooth sets, $B_1(0)$ and $y_0+a\Omega$, have the same measure since equality occurs in
the first inequality of \eqref{ineq}. 
We conclude that $B_1(0) = \nabla u (\Gamma_u)= 
y_0 + a\Omega$ and hence that $\Omega$ is a ball.
\end{proof}

The previous proof is also suited for a quantitative version as we will show in \cite{CCPRS} with Cinti,
Pratelli, Ros-Oton, and Serra.

\begin{rem}\label{proofABP}
The ABP estimate \eqref{abp} is proved proceeding as in the previous
proof for the isoperimetric
inequality, but now considering the Dirichlet problem \eqref{dirich} instead of 
\eqref{eqsem}. The main claim \eqref{gradmap} is now replaced by
$B_{M/d}(0)\subset\nabla u(\Gamma^{u})$, where
$M=\sup_\Omega u$, $d=\diam(\Omega)$
and $\Gamma^{u}$ is now the upper contact set of $u$. 
See Chapter~9 of \cite{GT} for details.
\end{rem}

In 1994 (before our proof), Trudinger \cite{T} had given a proof of the 
classical isoperimetric inequality
using the Monge-Amp\`ere operator and the ABP estimate. 
His proof consists of applying the ABP estimate to the problem
$$
\left\{
\alignedat2
\det D^2u &= \chi_\Omega &\quad &\text{in } B_R \\
u      &= 0         &\quad &\text{on }\partial B_R ,
\endalignedat
\right.
$$
where $\chi_\Omega$ is the characteristic 
function of $\Omega$ and $B_R=B_R(0)$, and then letting $R\to\infty$.

Before the proofs in \cite{T} and \cite{CSCM} using ABP, there was already Gromov's proof \cite{G}
of the isoperimetric inequality, which used the Knothe map (see also \cite{Cha} for a presentation).
A more classical proof of the isoperimetric problem is based on Steiner symmetrization; see \cite{F,O,Be}.
A fifth proof consists of deducing easily the isoperimetric inequality from the Brunn-Minkowski
inequality \eqref{brunn}; see \cite{Ga}.
Finally, in 2004 Cordero-Erausquin, Nazaret, and Villani \cite{CNV} used the Brenier map
from optimal transportation to give another proof of the isoperimetric inequality. This optimal transport proof,
as well as the Knothe-Gromov one, both lead also to the Wulff isoperimetric inequality for anisotropic perimeters
---which is discussed in the following section.

\section{The Wulff isoperimetric inequality}

In a personal communication, Robert McCann pointed out
that the previous proof also establishes the following
inequality concerning Wulff shapes and surface energies of crystals. 
Given any positive and smooth 
function $H$ on $\mathbb{S}^{n-1}=\partial B_1$
(the surface tension), consider the convex 
set $W\subset\R^n$ (called the Wulff shape) defined by
\begin{equation}\label{wulffshape}
W=\{ p\in\R^n \ :\ p\cdot\nu < H(\nu)\ \text{ for all }
\nu\in \mathbb{S}^{n-1}\}.
\end{equation}
Note that $W$ is an open set with $0\in W$.
To visualize $W$, it is useful to note that it is the intersection of the half-spaces
$\{p\cdot\nu<H(\nu)\}$ among all $\nu\in \mathbb{S}^{n-1}$. In particular, $W$ is a convex set.

For every smooth domain $\Omega\in\R^n$ (not necessarily convex),
define
$$
P_H(\Omega):=\int_{\partial\Omega} H(\nu(x))\, dS (x)
$$
to be its surface energy ---here $dS (x)$ denotes
the area element on $\partial\Omega$ and  $\nu(x)$ is the
unit exterior normal to $\partial\Omega$ at $x$. 
Then, among sets $\Omega$ with measure 
$\vert W\vert$,
the surface energy $P_H(\Omega)$ is minimized by (and only by) 
the Wulff shape $W$ and its translates.
Equivalently, for every $\Omega$ (without restriction on its
measure) we have:

\begin{theorem}[\cite{W,T1,T2}]
Let $\Omega$ be a bounded smooth domain of $\R^n$. Then
$$
\frac{P_H(\Omega)}{\vert \Omega \vert^{\frac{n-1}{n}}}
\ge \frac{P_H(W)}{\vert W \vert^{\frac{n-1}{n}}} ,
$$
with equality if only if $\Omega=aW+b$ for some $a>0$
and $b\in\R^n$. 
\end{theorem}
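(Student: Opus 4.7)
The plan is to mimic closely the ABP proof of the classical isoperimetric inequality given in the previous section, replacing the Neumann data $1$ by the surface tension $H(\nu)$ so that the unit ball gets replaced by the Wulff shape $W$. Concretely, I would let $u$ solve the linear Neumann problem
\begin{equation*}
\left\{
\alignedat2
\Delta u &= \frac{P_H(\Omega)}{|\Omega|} &\quad &\text{in } \Omega, \\
\frac{\partial u}{\partial \nu} &= H(\nu) &\quad &\text{on } \partial\Omega,
\endalignedat
\right.
\end{equation*}
where the constant on the right-hand side is forced by the compatibility condition $\int_\Omega \Delta u = \int_{\partial\Omega} H(\nu)\,dS = P_H(\Omega)$. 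Since $H$ is smooth and positive and $\Omega$ is smooth, $u$ exists and is smooth in $\overline{\Omega}$.

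Next, I would introduce the lower contact set $\Gamma_u$ exactly as in \eqref{lcset} and prove the key inclusion
\begin{equation*}
W \subset \nabla u(\Gamma_u).
\end{equation*}
The argument is the same geometric one: for any $p\in W$, pick $x\in\overline\Omega$ minimizing $u(y)-p\cdot y$. If $x$ lay on $\partial\Omega$, one-sided differentiation would give $\partial u/\partial\nu(x) \le p\cdot \nu(x)$, contradicting $\partial u/\partial\nu(x)=H(\nu(x))>p\cdot\nu(x)$, which is precisely the defining property \eqref{wulffshape} of $W$. Hence $x$ is interior, so $p=\nabla u(x)$ and $x\in\Gamma_u$.

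Given this inclusion, the area formula and the arithmetic-geometric means inequality, applied exactly as in \eqref{ineq}--\eqref{contact}, yield
\begin{equation*}
|W| \le \int_{\Gamma_u}\det D^2 u\,dx \le \left(\frac{\Delta u}{n}\right)^n |\Gamma_u| \le \left(\frac{P_H(\Omega)}{n|\Omega|}\right)^n |\Omega|.
\end{equation*}
To turn this into the stated inequality I would use the identity $P_H(W)=n|W|$, which follows from the divergence theorem together with $x\cdot \nu_W(x)=H(\nu_W(x))$ on $\partial W$ (the support-function characterization of $W$): indeed $n|W|=\int_W \operatorname{div}x\,dx=\int_{\partial W}x\cdot\nu_W\,dS=\int_{\partial W}H(\nu_W)\,dS=P_H(W)$. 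Rearranging the displayed inequality then gives $P_H(W)/|W|^{(n-1)/n}\le P_H(\Omega)/|\Omega|^{(n-1)/n}$.

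The routine part is everything above; the main obstacle is the characterization of equality. When all the inequalities are saturated one deduces as in the classical case that $\Gamma_u=\Omega$ and $D^2 u \equiv a\,\iden$ on the connected set $\Omega$, so $\nabla u - a\,\iden\equiv y_0$ is constant and $\nabla u(\Omega)=y_0+a\Omega$. Combined with $W\subset \nabla u(\Gamma_u)$ and the fact that $|W|=|\nabla u(\Gamma_u)|=a^n|\Omega|$, one concludes $W=y_0+a\Omega$, i.e., $\Omega$ is a translate and dilate of $W$. The delicate point here is ruling out, for general $H$, any exotic extremal that is not a dilate of $W$; this is handled by the rigidity of the pointwise identity $D^2 u=a\,\iden$, which forces $\nabla u$ to be an affine map and thus transports $\Omega$ to $W$ up to homothety.
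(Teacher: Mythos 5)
Your proposal is correct and follows essentially the same route as the paper: the Neumann problem with data $H(\nu)$, the inclusion $W\subset\nabla u(\Gamma_u)$ via touching hyperplanes (Legendre transform), the area formula with the arithmetic-geometric means inequality, and the identity $P_H(W)=n|W|$ from the divergence theorem and $x\cdot\nu=H(\nu)$ on $\partial W$. Your treatment of the equality case also matches the paper's, which simply adapts the rigidity argument ($\Gamma_u=\Omega$, $D^2u=a\,\iden$, hence $\nabla u$ affine) from the classical case and refers to \cite{CRS2} for details.
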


This theorem was first stated, without proof, by Wulff \cite{W} in 1901.
His work was followed by Dinghas \cite{D}, who studied the problem within the class of convex polyhedra.
He used the Brunn-Minkowski inequality
\begin{equation}\label{brunn}
 |A+B|^{\frac1n}\geq |A|^{\frac1n}+|B|^{\frac1n},
\end{equation}
valid for all nonempty measurable sets $A$ and $B$ of $\mathbb R^n$ for which $A+B$ is also measurable; 
see \cite{Ga} for more information on this inequality.
Some years later, Taylor \cite{T1,T2} finally proved the theorem  among sets of finite perimeter
---see \cite{CRS2} for more references in this subject. As mentioned in the previous section,
this anisotropic isoperimetric inequality also follows easily using the Knothe-Gromov map or the 
Brenier map from optimal transport. In addition, a proof of the Wulff theorem using an anisotropic rearrangement 
was given by Van Schaftingen (with a method coming from Klimov~\cite{Kl}).

This anisotropic isoperimetric problem can be solved with the same method
that we have used above for the isoperimetric problem. 
One considers now
the solution of 
\begin{equation*}
\left\{
\alignedat2
\Delta u &= \frac{P_H(\Omega)}{\vert \Omega \vert} 
&\quad &\text{in } \Omega\\
\frac{\partial u}{\partial\nu} &=H(\nu) &\quad &\text{on }\partial \Omega ,
\endalignedat
\right.
\label{eqsemW}
\end{equation*}
Claim \eqref{gradmap} is now replaced by
$W\subset\nabla u(\Gamma_u)$, which is proved again using
the Legendre transform of $u$. Then, the area formula
gives $\vert W\vert\le\{P_H(\Omega)/(n\vert\Omega\vert)\}^n
\vert\Omega\vert$. 

To conclude, one uses
that $P_H(W)=n\vert W\vert$. 
This last equality follows from the fact
that $H(\nu(p))=p\cdot\nu(p)$ for almost every $p\in\partial W$
(here $\nu(p)$ denotes the unit exterior normal to $\partial W$ at $p$), and thus
\begin{equation*}\label{per/vol}
P_{H}(W)=\int_{\partial W} H(\nu(x))dS=\int_{\partial W}x\cdot\nu(x)dS=\int_W \div(x)dx = n|W|,
\end{equation*}

A similar argument as in the previous section shows that equality is only achieved by
the sets $\Omega=aW+b$; see \cite{CRS2} for details.

\section{Weighted isoperimetric and Sobolev inequalities in convex cones}

The isoperimetric inequality in convex cones of Lions and Pacella reads as follows.

\begin{theorem}[\cite{LP}] \label{isopcon}
Let $\Sigma$ be an open convex cone in $\mathbb R^n$ with vertex at $0$, and $B_1:=B_1(0)$.
Then,
\begin{equation*}\label{pp}
\frac{P(\Omega;\Sigma)}{|\Omega\cap\Sigma|^{\frac{n-1}{n}} }\ge \frac{P(B_1;\Sigma)}{|B_1\cap\Sigma|^{\frac{n-1}{n}}}
\end{equation*}
for every measurable set $\Omega\subset \R^n$ with $|\Omega\cap\Sigma|<\infty$.
Here $P(\Omega;\Sigma)$ is the perimeter of $\Omega$ relative to $\Sigma$. It agrees with
the $(n-1)$-Hausdorff measure of $\partial \Omega \cap \Sigma$ for smooth sets $\Omega$.
\end{theorem}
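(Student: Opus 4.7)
The plan is to adapt the ABP proof of the classical isoperimetric inequality from Section~3 to this cone setting. Assume first that $\Omega \cap \Sigma$ is bounded with smooth boundary inside $\Sigma$, and solve the mixed Neumann problem
\begin{equation*}
\left\{
\alignedat2
\Delta u &= \frac{P(\Omega;\Sigma)}{|\Omega\cap\Sigma|} &\quad &\text{in } \Omega\cap\Sigma,\\
\frac{\partial u}{\partial\nu} &= 1 &\quad &\text{on } \partial\Omega\cap\Sigma,\\
\frac{\partial u}{\partial\nu} &= 0 &\quad &\text{on } \overline\Omega\cap\partial\Sigma.
\endalignedat
\right.
\end{equation*}
The constant on the right-hand side is dictated by the divergence-theorem compatibility condition, so a solution exists (modulo technicalities at corners, which I address below).

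The key claim, playing the role of \eqref{gradmap}, is
$$
B_1(0)\cap\Sigma \,\subset\, \nabla u(\Gamma_u),
$$
where $\Gamma_u$ is the lower contact set defined as in \eqref{lcset}. Given $p\in B_1\cap\Sigma$, pick a minimizer $x\in \overline{\Omega\cap\Sigma}$ of $y\mapsto u(y)-p\cdot y$. If $x\in\partial\Omega\cap\Sigma$, then the outer normal derivative of this function is $\le 0$, forcing $1=\partial_\nu u(x)\le p\cdot \nu(x)\le|p|<1$, a contradiction. If instead $x\in\overline\Omega\cap\partial\Sigma$ (and $x$ is a smooth point of $\partial\Sigma$), convexity of $\Sigma$ (with vertex at $0$) implies $p\cdot\nu(x)\le 0$ for every $p\in\Sigma$, since the supporting hyperplane at $x$ contains the origin; together with $\partial_\nu u=0$ this forces $p\cdot\nu(x)=0$, so in fact $x$ is still a critical point and $p=\nabla u(x)$. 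Either way one concludes $x\in\Gamma_u$ with $\nabla u(x)=p$.

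From the claim, the area formula and the arithmetic–geometric mean inequality for the nonnegative eigenvalues of $D^2u$ on $\Gamma_u$ yield
$$
|B_1\cap\Sigma| \;\le\; \int_{\Gamma_u}\det D^2u \;\le\; \int_{\Gamma_u}\bigl(\Delta u/n\bigr)^n \;\le\; \left(\frac{P(\Omega;\Sigma)}{n|\Omega\cap\Sigma|}\right)^{\!n}\!|\Omega\cap\Sigma|.
$$
To convert this into the desired inequality, I use the cone identity $P(B_1;\Sigma)=n\,|B_1\cap\Sigma|$, which follows from the divergence theorem: $\int_{B_1\cap\Sigma}\div(x)\,dx = n|B_1\cap\Sigma|$, while on $\partial B_1\cap\Sigma$ one has $x\cdot\nu=1$ and on $B_1\cap\partial\Sigma$ the position vector $x$ is tangent to the cone, so $x\cdot\nu=0$. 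Taking $n$-th roots and rearranging gives exactly $P(\Omega;\Sigma)/|\Omega\cap\Sigma|^{(n-1)/n}\ge P(B_1;\Sigma)/|B_1\cap\Sigma|^{(n-1)/n}$.

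The main obstacle will be the regularity of $u$ at the non-smooth corners of $\Omega\cap\Sigma$, i.e., where $\partial\Omega$ meets $\partial\Sigma$, and at the (possibly non-smooth) vertex of $\Sigma$. These are genuine Lipschitz corners where classical $C^2$ theory for mixed Neumann problems fails, and the argument using area-formula/contact sets needs $u\in C^{1,1}$ on an appropriate set (or at least enough regularity to make sense of $D^2u$ and $\nabla u(\Gamma_u)$ via an $L^\infty$ Hessian a.e.). I would handle this by an approximation procedure: smooth out $\Omega$ slightly inside $\Sigma$, replace $\Sigma$ by a smoothed convex set if needed, solve the approximate problem where classical regularity up to the boundary holds, apply the ABP argument, and then pass to the limit. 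Reduction from general measurable $\Omega$ to smooth $\Omega$ is standard via relaxation of the perimeter. The case of equality and the extension to weighted settings as in \cite{CRS2} would then follow by the same scheme after replacing $\Delta$ by the drift operator $w^{-1}\div(w\nabla u)$ and using the homogeneity and concavity of $w^{1/\alpha}$ in the arithmetic–geometric step.
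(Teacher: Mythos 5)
Your skeleton is the natural one---it is exactly the heuristic recorded in Section~5 before the proof of Theorem~\ref{th1}, namely that $u(x)=|x|^2/2$ solves the mixed problem when $\Omega=B_1\cap\Sigma$---and your first-order analysis at contact points is correct: touching on $\partial\Omega\cap\Sigma$ is impossible since $1=\partial_\nu u(x)\le p\cdot\nu(x)<1$, while at a smooth point of $\partial\Sigma$ minimality gives $p\cdot\nu(x)\ge 0$ and the fact that supporting hyperplanes of a cone pass through the origin gives $p\cdot\nu(x)\le 0$; this is precisely where convexity enters. The genuine gap is the step you defer to ``approximation''. The mixed Neumann problem on the Lipschitz domain $\Omega\cap\Sigma$, with data jumping from $1$ to $0$ across the edge $\partial\Omega\cap\partial\Sigma$ (and with the vertex of the cone), does not in general have solutions that are $C^{1,1}$, or in $W^{2,n}$ with a usable area formula, up to the boundary, and that regularity is exactly what the contact-set argument consumes. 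Moreover, the contact points you produce on $\partial\Sigma$ do not lie in $\Gamma_u\subset\Omega\cap\Sigma$ as defined in \eqref{lcset}, so to keep $B_1\cap\Sigma\subset\nabla u(\Gamma_u)$ up to a null set you must show that the slopes reached only through boundary contact form a Lebesgue-null set (the image of $\partial\Sigma\cap\overline\Omega$ under $\nabla u$), which again requires Lipschitz control of $\nabla u$ up to that portion of the boundary. Neither of your proposed patches repairs this: (a) smoothing or pushing $\Omega$ strictly inside $\Sigma$ does not approximate the \emph{relative} perimeter, because for $w\equiv 1$ the part of $\partial\Omega$ lying on $\partial\Sigma$, which is not counted in $P(\Omega;\Sigma)$, becomes counted after the perturbation---this is exactly why the proof given in Section~5 assumes $w\equiv 0$ on $\partial\Sigma$ before translating $\Omega$ into the cone; (b) smoothing $\Sigma$ does not remove the edge of the domain $\Omega\cap\Sigma$ where the Neumann data is discontinuous, and if instead you smooth that edge you must choose data on the new boundary piece, while on a smoothed $\partial\Sigma$ the key inequality $p\cdot\nu(x)\le 0$ for $p\in\Sigma$ is no longer available (supporting hyperplanes need not pass through the origin), so the covering claim can fail.

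For comparison, the paper never solves the mixed problem: Theorem~\ref{isopcon} is obtained either from the Brunn--Minkowski inequality \eqref{brunn} as in \cite{LP}, or by reduction to a degenerate Wulff inequality whose gauge $H$ is the support function of $B_1\cap\Sigma$, which vanishes on normals to $\partial\Sigma$ and equals $1$ on unit vectors of $\Sigma$; since $P_H(\Omega)\le P(\Omega;\Sigma)$ for sets $\Omega\subset\Sigma$ and $P_H(B_1\cap\Sigma)=P(B_1;\Sigma)$, the sharp constant survives, and this degenerate Wulff inequality is proved by the ABP method applied only to smooth domains compactly contained in $\Sigma$ (translation into the cone plus regularization), exactly as in the $w\equiv 0$ case of Section~5; see \cite{CRS2}. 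If you wish to keep your scheme, the viable fix is essentially that one: replace your two boundary data $1$ and $0$ by the single continuous datum $\partial u/\partial\nu=H(\nu)$ with this degenerate gauge and work with domains whose closure lies in $\Sigma$, rather than attacking the corner problem directly.
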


Note that $\Sigma$ is an open set. Hence, if there is a  part of $\partial \Omega$ contained in $\partial\Sigma$, 
then it is not counted in this perimeter. The assumption of convexity of the cone can not be removed as shown in \cite{LP}. 

The proof of Theorem \ref{isopcon} given in \cite{LP} is based on the Brunn-Minkowski inequality
\eqref{brunn}. Alternatively, 
Theorem \ref{isopcon} can also be deduced from a degenerate case of the classical Wulff inequality 
of Section~4. For this, one must allow the surface energy $H$ to vanish in part of $\mathbb{S}^{n-1}$.
More precisely, we say that a function $H$ defined in $\R^n$ is a \emph{gauge} when
\begin{equation}\label{gauge}
H\textrm{ is nonnegative, positively homogeneous of degree one, and convex}.
\end{equation}
The Wulff inequality can be proved for such surface energies $H$. With this in hand, one can establish the Lions-Pacella
inequality as follows.

It is easy to prove that the convex set $B_1\cap\Sigma$ is equal to the Wulff shape $W$, defined by \eqref{wulffshape},
for a unique gauge $H$ (which depends on the cone $\Sigma$). 
This function $H$ vanishes on normal vectors to $\partial\Sigma$ and agrees with 1 on 
unit vectors inside $\Sigma$. This is why one can recover the Lions-Pacella inequality from the Wulff one associated
to this $H$. In particular, the Lions-Pacella inequality can be proved using the ABP method; see \cite{CRS2}
for more details.

Let us now turn to the extension of the Lions-Pacella theorem in \cite{CRS2} to the case 
of some homogeneous weights, as explained in the Introduction. Given a gauge $H$ and a nonnegative function
$w$ defined in $\overline \Sigma$, consider the weighted anisotropic perimeter
\begin{equation*} \label{defperiint}
P_{w,H}(\Omega;\Sigma) := \int_{\partial \Omega \cap \Sigma}H\bigl(\nu(x)\bigr)w(x)dS,
\end{equation*}
(defined in this way when $\partial\Omega$ is regular enough) and the weighted measure
$$
w(\Omega\cap\Sigma):=\int_{\Omega\cap\Sigma} w(x)\, dx.
$$

\begin{theorem}[\cite{CRS2}]\label{th1}
Let $H$ be a gauge in $\R^n$, i.e., a function satisfying \eqref{gauge}, and $W$ its associated Wulff shape defined by 
\eqref{wulffshape}.
Let $\Sigma$ be an open convex cone in $\mathbb R^n$ with vertex at the origin, and such that
$W\cap \Sigma\neq \varnothing$.
Let $w$ be a continuous function in $\overline\Sigma$, positive in $\Sigma$, and positively homogeneous of degree 
$\alpha\geq0$.
Assume in addition that $w^{1/\alpha}$ is concave in $\Sigma$ in case $\alpha>0$.

Then, for each measurable set $\Omega\subset\R^n$ with $w(\Omega\cap \Sigma)<\infty$,
\begin{equation}\label{mainresult}
\frac{P_{w,H}(\Omega;\Sigma) }{w(\Omega\cap \Sigma)^{\frac{D-1}{D}} }\geq \frac{P_{w,H}(W;\Sigma) }
{w(W\cap \Sigma)^{\frac{D-1}{D}}},
\end{equation}
where $D=n+\alpha$.
\end{theorem}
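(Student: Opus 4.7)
The plan is to mimic the ABP proofs of the classical and Wulff isoperimetric inequalities from Sections~3 and~4, but now applied to the \emph{weighted} operator $L_w u := w^{-1}\div(w\nabla u) = \Delta u + (\nabla w/w)\cdot\nabla u$ on the domain $\Omega\cap\Sigma$, with mixed Neumann data that incorporates both the surface tension $H$ and the lateral cone boundary $\partial\Sigma$. After a standard approximation reducing to the case where $\Omega$ is smooth and $w$ is smooth and strictly positive on $\overline{\Omega\cap\Sigma}$, I would solve the mixed Neumann problem
\[
L_w u = \frac{P_{w,H}(\Omega;\Sigma)}{w(\Omega\cap\Sigma)}\text{ in }\Omega\cap\Sigma,\quad \frac{\partial u}{\partial\nu}=H(\nu)\text{ on }\partial\Omega\cap\Sigma,\quad \frac{\partial u}{\partial\nu}=0\text{ on }\partial\Sigma\cap\overline\Omega.
\]
The constant on the right is forced by the solvability condition for $L_w$: multiplying by $w$ and applying the divergence theorem shows that both sides integrate to $P_{w,H}(\Omega;\Sigma)$.

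Next, as in the earlier ABP proofs, I would establish the contact-set inclusion
\[
W\cap\Sigma\subset\nabla u(\Gamma_u),
\]
where $\Gamma_u$ is the lower contact set of $u$ in $\Omega\cap\Sigma$. Given $p\in W\cap\Sigma$, a minimizer of $y\mapsto u(y)-p\cdot y$ on $\overline{\Omega\cap\Sigma}$ cannot sit on $\partial\Omega\cap\Sigma$, because there $\partial u/\partial\nu=H(\nu)>p\cdot\nu$ by the very definition of $W$; nor on $\partial\Sigma\cap\overline\Omega$, because there $\partial u/\partial\nu=0$ while $p\cdot\nu<0$ (since $\nu$ is an outward normal to the convex cone $\Sigma$ and $p$ lies in the open cone $\Sigma$). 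Hence the minimum is attained at an interior point $x\in\Gamma_u$ with $\nabla u(x)=p$.

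The third step is the area formula applied to $\nabla u$ against the weight $w(p)\,dp$,
\[
w(W\cap\Sigma)\le\int_{\Gamma_u\cap\nabla u^{-1}(\Sigma)}w(\nabla u(x))\,\det D^2 u(x)\,dx,
\]
combined with the pointwise bound
\[
w(\nabla u(x))\,\det D^2 u(x)\le w(x)\left(\frac{P_{w,H}(\Omega;\Sigma)}{D\,w(\Omega\cap\Sigma)}\right)^{\!D}.
\]
This pointwise estimate is the technical heart of the argument, and the step I expect to be the main obstacle. My approach would be a weighted arithmetic-geometric means inequality on the $n$ nonnegative eigenvalues of $D^2 u(x)$ augmented by an auxiliary term of weight $\alpha$, namely $T^{\alpha}\det D^2 u\le((\Delta u+\alpha T)/D)^D$, applied with $T=(w(\nabla u(x))/w(x))^{1/\alpha}$. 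The concavity of $w^{1/\alpha}$ on $\Sigma$, together with Euler's identity $\nabla w^{1/\alpha}(x)\cdot x=w^{1/\alpha}(x)$, yields $w^{1/\alpha}(\nabla u(x))\le\nabla w^{1/\alpha}(x)\cdot\nabla u(x)$, which rearranges exactly to $\alpha T\le (\nabla w(x)/w(x))\cdot\nabla u(x)$. Plugging this into the AM-GM bound and using the PDE satisfied by $u$ closes the estimate. The case $\alpha=0$ recovers the ABP proof of the Wulff inequality in a cone, i.e., the Lions-Pacella inequality.

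Integrating the pointwise estimate over $\Gamma_u\subset\Omega\cap\Sigma$ yields
\[
w(W\cap\Sigma)\le\left(\frac{P_{w,H}(\Omega;\Sigma)}{D\,w(\Omega\cap\Sigma)}\right)^{\!D}w(\Omega\cap\Sigma),
\]
which is equivalent to \eqref{mainresult} once one checks the identity $P_{w,H}(W;\Sigma)=D\,w(W\cap\Sigma)$. That identity follows by applying the divergence theorem to the vector field $w(x)x$ on $W\cap\Sigma$: Euler's identity gives $\div(wx)=(n+\alpha)w=Dw$, while $x\cdot\nu=H(\nu)$ on $\partial W\cap\Sigma$ and $x\cdot\nu=0$ on $\partial\Sigma\cap\overline W$. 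The second difficulty to handle with care is the approximation that passes from smooth, strictly positive $w$ on a smooth $\Omega$ to the general $\Omega$ and $w$ of the statement, particularly near $\partial\Sigma$, where $w$ may degenerate and the mixed Neumann problem has corner singularities.
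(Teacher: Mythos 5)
Your core ABP computation is exactly the one used in the paper and in \cite{CRS2}: the weighted operator $w^{-1}\textrm{div}(w\nabla u)$, the lower contact set, the inclusion of the (truncated) Wulff shape in $\nabla u(\Gamma_u)$, the area formula against the weight, the weighted arithmetic--geometric inequality $s^\alpha t^n\le\bigl((\alpha s+nt)/(\alpha+n)\bigr)^{\alpha+n}$ combined with the concavity/Euler inequality \eqref{lemmaw} (your derivation of \eqref{lemmaw} is the right one), and the identity $P_{w,H}(W;\Sigma)=D\,w(W\cap\Sigma)$ as in \eqref{formula-per=Dvol-for-Wulff}. The genuine gap is the step you single out and then leave unresolved: the existence of a sufficiently regular solution of the mixed Neumann problem on $\Omega\cap\Sigma$. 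The contact argument needs $u\in C^1$ up to every boundary point where the touching could occur (to read off $\partial u/\partial\nu$ at a boundary minimum) and $C^{1,1}$ or $W^{2,n}_{\textrm{loc}}$ in the interior (for the area formula). For a Zaremba-type problem whose Neumann datum jumps from $H(\nu)$ to $0$ across the edge $\partial\Omega\cap\partial\Sigma$, on a domain that may contain the vertex and edges of a non-smooth cone (e.g.\ an orthant), with $H$ merely convex, $1$-homogeneous and possibly vanishing, and with $w$ degenerating on $\partial\Sigma$, no such regularity is available; moreover, at non-smooth points of $\partial\Sigma$ the normal $\nu$ is not even defined, so your exclusion of boundary minima is incomplete there. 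This is precisely what the proof given in the paper is built to avoid: in the case it treats ($H=\|\cdot\|_2$, $w\equiv0$ on $\partial\Sigma$) one first translates $\Omega$ by $\delta y$ with $y\in\Sigma$ into the open cone --- the hypothesis $w\equiv0$ on $\partial\Sigma$ being exactly what makes $P_w$ and the weighted volume continuous under this translation, cf.\ \eqref{perimeteronlyw} --- so that only the pure Neumann problem \eqref{eqsem2}, on a smooth domain with smooth positive coefficients, is ever solved; the general case is delegated to \cite{CRS2}, where treating rigorously this very point is a substantial part of the work.

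A second, related gap is your opening claim that a ``standard approximation'' reduces to $w$ smooth and strictly positive on $\overline{\Omega\cap\Sigma}$. For the weights the theorem is aimed at (e.g.\ monomial weights) $w$ vanishes identically on $\partial\Sigma$, and any regularization must preserve the two structural features that your pointwise bound \eqref{36}-type estimate actually uses --- homogeneity of degree $\alpha$ (Euler's identity) and concavity of $w^{1/\alpha}$, i.e.\ inequality \eqref{lemmaw} --- while leaving $P_{w,H}(\Omega;\Sigma)$ and $w(\Omega\cap\Sigma)$ essentially unchanged. Replacing $w$ by $w+\epsilon$ or by a mollification destroys homogeneity and hence \eqref{lemmaw}; in the paper the regularization of $w$ is performed only after the reduction to $\overline\Omega\subset\Sigma$, where $w$ is bounded away from zero and the needed inequalities can be kept up to small errors (see \cite{CRS2} for the details). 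In short, your skeleton is the correct one, but the missing analytic input --- well-posedness and boundary regularity for the mixed problem, or an approximation scheme that circumvents it while preserving the homogeneity and concavity structure --- is the actual content of the proof and is not supplied.
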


After announcing our result in \cite{CRS1} and posting the preprint \cite{CRS2}, 
E. Milman and L. Rotem \cite{MR} have found an alternative proof of our isoperimetric inequality,  Theorem \ref{th1}
(\cite{MR} mentions that the same has been found independently by Nguyen).
Their proof uses the Borell-Brascamp-Lieb extension of the the Brunn-Minkowski inequality.

Our key hypothesis that $w^{1/\alpha}$ is a concave function is equivalent to a
natural curvature-dimension bound, in fact to the nonnegativeness of a
Bakry-\'Emery Ricci tensor in dimension $D=n+\alpha$. This was pointed out by C. Villani.

Note that the shape of the minimizer is $W\cap\Sigma$, and that $W$ depends only on $H$ and not on the weight 
$w$ neither on the cone $\Sigma$.
In particular, in the isotropic case $H=\|\cdot\|_{2}$ we find the following noteworthy fact.
Even if the weights that we consider are not radial (unless $w\equiv \textrm{constant}$), 
still Euclidean balls centered at 
the origin (intersected with the cone) minimize this isoperimetric quotient.

Equality in \eqref{mainresult} holds whenever $\Omega\cap \Sigma=rW\cap \Sigma$, where $r$ is any positive number.
That $rW\cap\Sigma$ is the unique minimizer of \eqref{mainresult} will be shown in the upcoming paper \cite{CCPRS},
where in addition we show a quantitative version of \eqref{mainresult}.

Note also that we allow $w$ to vanish somewhere (or everywhere) on $\partial\Sigma$.
This happens in the case of the monomial weights \eqref{mon}, for which the previous
theorem holds. From \eqref{mainresult}, it is simple to deduce the sharp Sobolev inequality with monomial weights
\eqref{Sob} stated in the introduction.

Next, to show the key ideas in a simpler situation, 
we prove Theorem \ref{th1} in the isotropic case $H=\|\cdot\|_2$ when the weight $w\equiv0$ on 
$\partial\Sigma$. This is the case of the monomial weights. To simplify, we also assume that
$\Omega= U\cap \Sigma$, where $U$ is some bounded smooth domain in $\R^n$.

Let $w$ be a positive homogeneous function of degree $\alpha>0$ in an open convex cone $\Sigma\subset\R^n$.
In the proof we will need an easy lemma stating that $w^{1/\alpha}$ is concave in $\Sigma$ if and only if
\begin{equation}\label{lemmaw}
\alpha\left(\frac{w(z)}{w(x)}\right)^{1/\alpha}\leq \frac{\nabla w(x)\cdot z}{w(x)}
\end{equation}
holds for each $x,z\in\Sigma$; see \cite{CRS2}.

To prove the result we will also need the following equality. Here we denote $P_{w,H}$ by $P_w$
since $H$ is the Euclidean norm. Using that $w\equiv 0$ on $\partial\Sigma$, we deduce
\begin{equation}\label{formula-per=Dvol-for-Wulff}
\begin{split} P_w(W;\Sigma)&= \int_{\partial W\cap \Sigma} H(\nu(x))w(x)dS= \int_{\partial W\cap \Sigma} 
x\cdot\nu(x)\,w(x)dS\\
&=\int_{\partial(W\cap \Sigma)}x\cdot\nu(x)w(x)dS= \int_{W\cap\Sigma} \div(x w(x))dx\\
&=\int_{W\cap\Sigma} \left\{nw(x)+x\cdot\nabla w(x)\right\}dx
=\int_{W\cap\Sigma} (n+\alpha) w(x)  dx\\
&=D\,w(W\cap \Sigma),
\end{split}
\end{equation}
where we have used that $x\cdot \nabla w(x)=\alpha w(x)$ since $w$ is homogeneous of degree $\alpha$.

A key point in the following proof is that, when $\Omega=B_1\cap\Sigma$, the function $u(x)=|x|^2/2$ solves
$w^{-1}\div(w\nabla u)=b$ for some constant $b$, the normal derivative of $u$ on $\partial B_1 \cap\Sigma$
is identically one, and the normal derivative of $u$ on $\partial\Sigma\cap B_1$ is identically zero.

\begin{proof}[Proof of Theorem \ref{th1} in the case $w\equiv 0$ on $\partial\Sigma$ and $H=\|\cdot\|_2$]
For the sake of simplicity we assume here that $\Omega= U\cap \Sigma$, where $U$ is some bounded
smooth domain in $\R^n$.

Observe that since $\Omega= U\cap \Sigma$ is piecewise Lipschitz, and $w\equiv0$ on $\partial\Sigma$, it holds
\begin{equation}
\label{perimeteronlyw}
P_w(\Omega ; \Sigma) = \int_{\partial \Omega} w(x)dx.
\end{equation}
Hence, using that $w\in C(\overline \Sigma)$ and \eqref{perimeteronlyw}, it is immediate to prove that for any $y\in \Sigma$ 
we have
\[  \lim_{\delta \downarrow 0} P_w(\Omega + \delta y;\Sigma) = 
P_w(\Omega;\Sigma) \quad \mbox{and} \quad \lim_{\delta \downarrow 0} w(\Omega + \delta y) = w(\Omega).\]
We have denoted $\Omega + \delta y  = \{x + \delta y \, , \ x\in \Omega\}$.
Note that $P_w(\Omega + \delta y;\Sigma)$ could not converge to $P_w(\Omega;\Sigma)$ as 
$\delta \downarrow 0$ if $w$ did not vanish on the boundary of the cone $\Sigma$.

By this approximation property and a subsequent regularization of $\Omega + \delta y$ (a detailed argument can be found 
in \cite{CRS2}), we see
that it  suffices to prove \eqref{mainresult} for smooth domains whose closure is contained in $\Sigma$.
Thus, from now on in the proof, $\Omega$ is a smooth domain satisfying 
$\overline\Omega\subset \Sigma$.

At this stage, it is clear that by approximating $w|_{\overline{\Omega}}$ we can assume $w\in C^\infty(\overline\Omega)$
and $w>0$ in $\overline\Omega$.

Let $u$ be a solution of the linear Neumann problem
\begin{equation}
\left\{
\alignedat2
w^{-1}\textrm{div}(w\nabla u) &= b_\Omega
&\quad &\text{in } \Omega\\
\frac{\partial u}{\partial\nu} &=1 &\quad &\text{on }\partial \Omega .
\endalignedat
\right.
\label{eqsem2}
\end{equation}
The Fredholm alternative ensures that there exists a solution of \eqref{eqsem2} (which is unique up to an additive constant) if and only if the constant $b_\Omega$ is given by
\begin{equation}\label{cttb}
b_\Omega=\frac{P_w(\Omega;\Sigma)}{w(\Omega)}.\end{equation}
Note also that since $w$ is positive and smooth in $\overline\Omega$, \eqref{eqsem2} is a uniformly elliptic problem with 
smooth coefficients.
Thus, $u\in C^{\infty}(\overline\Omega)$.
For these classical facts, see Example 2 in Section 10.5 of \cite{H}, or the end of Section 6.7 of \cite{GT}.

Consider now the lower contact set of $u$, $\Gamma_u$, defined by \eqref{lcset} as the set of points in $\Omega$ at which the
tangent hyperplane to the graph of $u$ lies below $u$ in all $\overline \Omega$.
Then, as in Section~3, we touch by below the graph of $u$ with hyperplanes of fixed slope $p\in B_1$, 
and using the boundary condition in \eqref{eqsem2} we deduce that $B_1 \subset \nabla u (\Gamma_u)$.
From this, we obtain
\begin{equation*}\label{fivepointstar}
B_1\cap \Sigma\subset \nabla u(\Gamma_u)\cap \Sigma
\end{equation*}
and thus
\begin{equation}\label{ineqsec3}
\begin{split}
w(B_1\cap \Sigma) &\leq \int_{\nabla u (\Gamma_u)\cap\Sigma}w(p)dp \\
&\leq \int_{\Gamma_u\cap (\nabla u)^{-1}(\Sigma)} w(\nabla u(x))\det D^2u(x)\,dx\\
&\leq  \int_{\Gamma_u\cap (\nabla u)^{-1}(\Sigma)} w(\nabla u)\left(\frac{\Delta u}{n}\right)^ndx.
\end{split}
\end{equation}
We have applied the area formula to the smooth map $\nabla u : \Gamma_u \rightarrow \mathbb R^n$ and also the classical 
arithmetic-geometric means inequality ---all eigenvalues of $D^2u$ are nonnegative in $\Gamma_u$ by definition of this set.

Next we use that, when $\alpha>0$,
\[s^{\alpha}t^n\leq \left(\frac{\alpha s+nt}{\alpha+n}\right)^{\alpha+n}\ \ \textrm{for all }\ s>0\ \textrm{and}\ t>0,\]
which follows from the concavity of the logarithm function. Using also \eqref{lemmaw}, we find
\[\frac{w(\nabla u)}{w(x)}\left(\frac{\Delta u}{n}\right)^n\leq
\left(\frac{\alpha\left(\frac{w(\nabla u)}{w(x)}\right)^{1/\alpha}+\Delta u}{\alpha+n}\right)^{\alpha+n}\leq
\left(\frac{\frac{\nabla w(x)\cdot \nabla u}{w(x)}+\Delta u}{D}\right)^{D}.\]
Recall that $D=n+\alpha$.
Thus, using the equation in \eqref{eqsem2}, we obtain
\begin{equation}\label{36}
\frac{w(\nabla u)}{w(x)}\left(\frac{\Delta u}{n}\right)^n\leq \left(\frac{b_\Omega}{D}\right)^{D}\ \ {\rm in}\ \Gamma_u\cap (\nabla u)^{-1}(\Sigma).
\end{equation}
If $\alpha=0$ then $w\equiv1$, and \eqref{36} is trivial.

Therefore, since $\Gamma_u\subset \Omega$, combining \eqref{ineqsec3} and \eqref{36} we obtain
\begin{equation*}\label{7}\begin{split}
w(B_1\cap\Sigma) &\leq \int_{\Gamma_u\cap (\nabla u)^{-1}(\Sigma)} \left(\frac{b_\Omega}{D}\right)^{D}w(x)dx=
\left(\frac{b_\Omega}{D}\right)^{D}w(\Gamma_u\cap (\nabla u)^{-1}(\Sigma))\\
&\leq \left(\frac{b_\Omega}{D}\right)^{D}w(\Omega)
= D^{-D}\frac{P_w(\Omega;\Sigma)^{D}}{w(\Omega)^{D-1}}.\end{split}
\end{equation*}
In the last equality we have used the value of the constant $b_\Omega$, given by \eqref{cttb}.

Finally, using that, by \eqref{formula-per=Dvol-for-Wulff}, we have $P_w(B_1\cap\Sigma;\Sigma) = D\,w(B_1\cap \Sigma)$,
we obtain the desired inequality \eqref{mainresult}.
\end{proof}

\end{document}